\documentclass[10pt, a4paper]{amsart}
\usepackage[dvips]{epsfig}
\usepackage{amsmath}
\usepackage{amssymb}
\usepackage{amsfonts}
\usepackage{amsthm}
\usepackage{amsbsy}
\usepackage{amsgen}
\usepackage{amscd}
\usepackage{amsopn}
\usepackage{amstext}
\usepackage{amsxtra}
\usepackage{xypic}

\newtheorem{theorem}{Theorem}[section]

\newtheorem{lemma}[theorem]{Lemma}

\newtheorem{defn}[theorem]{Definition}

\theoremstyle{definition}

\begin{document}

\title[A model category structure on the category of simplicial multicategories]
{A model category structure on the category of simplicial
multicategories}
\author[{A. E.} {Stanculescu}]{{Alexandru E.} {Stanculescu}}
\address{\newline Department of Mathematics and Statistics,
\newline Masarykova Univerzita, Kotl\'{a}{\v{r}}sk{\'{a}} 2,\newline
611 37 Brno, Czech Republic}
\email{stanculescu@math.muni.cz}

\begin{abstract}
We establish a Quillen model category structure on the category
of symmetric simplicial multicategories. This model structure extends 
the model structure on simplicial categories due to J. Bergner.
\end{abstract}

\maketitle
\section{Introduction}
A multicategory can be thought of as a generalization of the notion
of category, to the amount that an arrow is allowed to have a source
(or input) consisting of a (possibly empty) string of objects,
whereas the target (or output) remains a single object. Composition
of arrows is performed by inserting the output of an arrow into (one
of) the input(s) of the other. Then a multifunctor is a structure
preserving map between multicategories. For example, every
multicategory has an underlying category obtained by considering
only those arrows with source consisting of strings of length one
(or, one input). At the same time, a multicategory can be thought of
as an ``operad with many objects". The relationship between all 
of these structures can be displayed in a diagram
\[
   \xymatrix{
Monoids  \ar@{^{(}->} [d]  \ar@{^{(}->} [r] & Operads 
\ar@{^{(}->} [d]\\
Categories \ar@{^{(}->} [r] & Multicategories\\
 }
  \]
in which the two composites agree and each horizontal 
(vertical, respectively) inclusion is part of a coreflection 
(reflection, respectively), and $Operads$ stands for 
the category of non-symmetric operads.

By allowing the symmetric groups to act on the various strings of 
objects of a multicategory, and consequently requiring that 
composition of arrows be compatible with these actions in a 
certain natural way, one obtains the concept of symmetric
multicategory. We refer the reader to \cite{Le}, \cite{EM} 
and \cite{BM} for precise definitions, history and examples.

As there is a notion of category enriched over a symmetric 
monoidal category other than the category of sets, the same 
happens with multicategories. We shall mainly consider symmetric
multicategories enriched over simplicial sets, simply called
simplicial multicategories. Similarly, categories enriched over
simplicial sets will be called simplicial categories. The diagram
dispayed above has a simplicially enriched analogue in which 
$Operads$ is now the category of symmetric operads in 
simplicial sets.

In this paper we put a model category structure on the 
category of simplicial multicategories. The notion of weak 
equivalence that we use has been first defined, to the best 
of our knowledge, in \cite[Definition 12.1]{EM}, and it is a 
generalization of the notion of Dwyer-Kan equivalence of simplicial 
categories \cite{DK1}, \cite{Be}. We recall below the definition.

Consider the underlying simplicial category of a simplicial 
multicategory, which is right adjoint to the inclusion
of simplicial categories in simplicial multicategories.
To every simplicial category $\mathrm{C}$ one can 
associate a genuine category $\pi_{0}\mathrm{C}$. 
The objects of $\pi_{0}\mathrm{C}$ are 
the objects of $\mathrm{C}$ and the hom set 
$\pi_{0}\mathrm{C}(x,y)$ is the set of connected 
components of the simplicial set 
$\mathrm{C}(x,y)$. Now, a simplicial multifunctor
$f:\mathrm{M}\rightarrow \mathrm{N}$ is a weak equivalence if
$\pi_{0}f$ is essentially surjective and for every $k\geq 0$ and
every $(k+1)$-tuple $(a_{1},...,a_{k};b)$ of objects of
$\mathrm{M}$, the map $\mathrm{M}_{k}(a_{1},...,a_{k};b)
\rightarrow\mathrm{N}_{k}(f(a_{1}),...,f(a_{k});f(b))$ is a 
weak homotopy equivalence. Our main result is then the following 
\\

\textbf{Theorem.} (Theorem 3.5) \emph{With the class of 
weak equivalences defined above, the category of 
simplicial multicategories admits a cofibrantly generated 
Quillen model category structure.}
\\

To prove this theorem we employ a standard recognition 
principle for cofibrantly generated model categories 
\cite[11.3.1]{Hi}. To be able to apply this principle we 
use the explicit description of a generating set of trivial 
cofibrations of the similar model structure on simplicial 
categories due to J. Bergner \cite{Be}, 
a modification of some parts of Bergner's 
original argument and the model structure for simplicial 
multicategories with fixed set of objects \cite[Theorem 2.1]{BM}. 
The modification is essential for our proof to work, and it shows 
that our argument is not a formal extension of Bergner's.
Still, our line of proof  has a flavour of generality.

Here is the plan of the paper. In section 2 we review 
the notions and results from enriched (multi)category theory 
that we use. We have chosen to work in full generality, in 
the sense that our (symmetric multi)categories are enriched 
over an arbitrary closed symmetric monoidal category. 
This choice does not complicate things. The proof of the 
main result is presented in section 3. The modification 
alluded to above is contained in the proof of Lemma 3.6.

\section{Review of enriched categories and symmetric multicategories} 

Throughout this section $\mathcal{V}$ is a complete and 
cocomplete closed symmetric monoidal category with unit $I$ 
and initial object $\emptyset$. 

\subsection{Enriched categories}
The small $\mathcal{V}$-categories together with the $\mathcal{V}$-functors
between them form a category written $\mathcal{V}\text{-}{\bf Cat}$. 
If $S$ is a set, we denote by $\mathcal{V}$\text{-}{\bf Cat}$(S)$
the category of small $\mathcal{V}$-categories with fixed set of objects $S$.
We denote by $\mathcal{I}$ the $\mathcal{V}$-category with a 
single object $\ast$ and $\mathcal{I}(\ast,\ast)=I$. We
denote by $Ob$ the functor sending a $\mathcal{V}$-category
to its set of objects. 

If $\mathcal{K}$ is a class of maps of $\mathcal{V}$, we say that a 
$\mathcal{V}$-functor $f:\mathcal{A}\rightarrow \mathcal{B}$ is 
{\bf locally in} $\mathcal{K}$ if for each pair $x,y\in \mathcal{A}$ of 
objects, the map $f_{x,y}:\mathcal{A}(x,y)\rightarrow \mathcal{B}(f(x),f(y))$ 
is in $\mathcal{K}$. When $\mathcal{K}$ is the class of isomorphisms
of $\mathcal{V}$, a $\mathcal{V}$-functor which is locally an
isomorphism is called {\bf full and faithful}.

Let $f:\mathcal{A}\rightarrow \mathcal{B}$ be a
$\mathcal{V}$-functor and let $u=Ob(f)$. Then $f$ factors as
$\mathcal{A}\overset{f^{u}} \rightarrow u^{\ast}\mathcal{B}
\rightarrow \mathcal{B}$, where $f^{u}$ is a map in
$\mathcal{V}$\text{-}{\bf Cat}$(Ob(\mathcal{A}))$. One has
$u^{\ast}\mathcal{B}(a,a')=\mathcal{B}(f(a),f(a'))$ and
$u^{\ast}\mathcal{B} \rightarrow \mathcal{B}$ is full and faithful.

For an object $X$ of $\mathcal{V}$, we denote by 
$2_{X}$ the $\mathcal{V}$-category with two objects 0 and 1, and with 
$2_{X}(0,0)=2_{X}(1,1)=I$, $2_{X}(0,1)=X$ and $2_{X}(1,0)=\emptyset$.

Let $\mathcal{V}^{\Delta^{op}}$ have the levelwise monoidal product. 
We have a full and faithful functor
$$\varphi:\mathcal{V}^{\Delta^{op}}\text{-}{\bf Cat}\longrightarrow
(\mathcal{V}\text{-}{\bf Cat})^{\Delta^{op}}$$ given by
$Ob(\varphi(\mathcal{A})([n]))=Ob(\mathcal{A})$ for all $[n]\in
\Delta$ and $\varphi(\mathcal{A})([n])(a,b)=\mathcal{A}(a,b)([n])$
for all $a,b \in Ob(\mathcal{A})$. The category
$\mathcal{V}^{\Delta^{op}}\text{-}{\bf Cat}$ is coreflective in
$(\mathcal{V}\text{-}{\bf Cat})^{\Delta^{op}}$, that is, the functor
$\varphi$ has a right adjoint $R$, defined as follows. For
$\mathcal{B}\in (\mathcal{V}\text{-}{\bf Cat})^{\Delta^{op}}$ 
we put $Ob(R(\mathcal{B}))=\lim Ob(\mathcal{B}([n]))$ and 
$R(\mathcal{B})((a_{n}),(b_{n}))([m])=\mathcal{B}([m])(a_{m},b_{m})$.

\subsection{Enriched symmetric multicategories}
For the notions of {\bf symmetric} $\mathcal{V}$-{\bf multicategory} 
and {\bf symmetric} $\mathcal{V}$-{\bf multifunctor} we refer 
the reader to \cite[Definition 2.2.21]{Le} or \cite[2.1, 2.2]{EM}. 

If $\mathrm{M}$ is a symmetric $\mathcal{V}$-multicategory, 
$k\geq 0$ is an integer and $(a_{1},...,a_{k};b)$ is a $(k+1)$-tuple of 
objects, we follow \cite[2.1(2)]{EM} and denote by 
$\mathrm{M}_{k}(a_{1},...,a_{k};b)$ the $\mathcal{V}$-object of
``$k$-morphisms". When $k=0$, the $\mathcal{V}$-object of 
$0$-morphisms is denoted by $\mathrm{M}(\ ;b)$.

The small symmetric $\mathcal{V}$-multicategories together 
with the symmetric $\mathcal{V}$-multifunctors between 
them form a category written $\mathcal{V}$\text{-}{\bf SymMulticat}.
When $\mathcal{V}$ is the category $Set$ of sets, symmetric
$Set$-multicategories will be simply referred to as
{\bf multicategories}, and the category will be 
denoted by $\mathbf{SymMulticat}$.

A {\bf symmetric} $\mathcal{V}$-{\bf multigraph} is by definition a
symmetric $\mathcal{V}$-multicategory without composition and unit
maps. We shall write $\mathcal{V}$\text{-}{\bf SymMultigraph} for the
category of symmetric $\mathcal{V}$-multigraphs with the evident
notion of arrow. When $\mathcal{V}=Set$, the category is denoted by
{\bf SymMultigraph}.

We denote by $Ob$ the functor sending a
symmetric $\mathcal{V}$-multicategory (or a symmetric
$\mathcal{V}$-multigraph) to its set of objects. The functor $Ob$ is
a Grothendieck bifibration. There is a free-forgetful adjunction
\[
 \xymatrix{
\mathcal{F}:\mathcal{V}\text{-}{\bf SymMultigraph} 
\ar @<2pt> [rr] \ar[dr]_{Ob} & & 
\mathcal{V}\text{-}{\bf SymMulticat}:\mathcal{U} 
\ar @<2pt> [ll] \ar[dl]^{Ob} \qquad (1)\\
& Set\\
}
  \]
We write $\mathcal{V}$\text{-}{\bf SymMultigraph}$(S)$
(resp. $\mathcal{V}$\text{-}{\bf SymMulticat}$(S)$) for 
the fibre category over a set $S$. The category 
$\mathcal{V}$\text{-}{\bf SymMultigraph}$(S)$
admits a nonsymmetric monoidal product which 
preserves filtered colimits in each variable, and the 
category $\mathcal{V}$\text{-}{\bf SymMulticat}$(S)$
is precisely the category of monoids in 
$\mathcal{V}$\text{-}{\bf SymMultigraph}$(S)$
with respect to this monoidal product \cite[Section 7]{BM}.
From the general theory of limits and colimits in bifibrations it 
follows that $\mathcal{V}$\text{-}{\bf SymMulticat} is 
complete and cocomplete. If $\mathcal{V}$ is moreover accessible, 
it follows from the general theory \cite[Theorem 5.3.4]{MP} 
that $\mathcal{V}$\text{-}{\bf SymMulticat} is accessible.

$\mathcal{V}$-categories and symmetric
$\mathcal{V}$-multicategories can be related by the 
adjunction
\[
   \xymatrix{
E:\mathcal{V}\text{-}{\bf Cat} \ar @<2pt> [rr] \ar[dr]_{Ob} 
& & \mathcal{V}\text{-}{\bf SymMulticat}:(-)_{1} 
\ar @<2pt> [ll] \ar[dl]^{Ob} \qquad (2)\\
   & Set\\
}
  \]
where
$$(E\mathcal{A})_{k}(a_{1},...,a_{k};b)=
\begin{cases}
\mathcal{A}(a_{1},b) & \text{if } k=1,\\
\emptyset & \text{otherwise},\\
\end{cases} $$
and $\mathrm{M}_{1}(a,b)=\mathrm{M}_{1}(a;b)$. 
$(-)_{1}$ is referred to as the underlying 
$\mathcal{V}$-category functor. The functor $E$ is
full and faithful.

Let $\mathcal{K}$ be a class of maps of $\mathcal{V}$.
We say that a symmetric $\mathcal{V}$-multifunctor
$f:\mathrm{M}\rightarrow \mathrm{N}$ is {\bf locally in}
$\mathcal{K}$ if for each integer $k\geq 0$ and each 
$(k+1)$-tuple of objects $(a_{1},...,a_{k};b)$, the map
$f:\mathrm{M}_{k}(a_{1},...,a_{k};b)\rightarrow
\mathrm{N}_{k}(f(a_{1}),...,f(a_{k});f(b))$ is in $\mathcal{K}$.
When $\mathcal{K}$ is the class of isomorphisms of $\mathcal{V}$, 
a symmetric $\mathcal{V}$-multifunctor which is locally an 
isomorphism is called {\bf full and faithful}.

We recall that a $\mathcal{V}$-{\bf multigraph}
$\mathrm{M}$ consists of a set of objects $Ob(\mathrm{M})$ together
with an object $\mathrm{M}_{k}(a_{1},...,a_{k};b)$ of $\mathcal{V}$
assigned to each integer $k\geq 0$ and each $(k+1)$-tuple of
objects$(a_{1},...,a_{k};b)$. We write
$\mathcal{V}$\text{-}{\bf Multigraph} for the resulting category. 
In the case when $\mathcal{V}=Set$, this category is denoted by
{\bf Multigraph} and its objects will be called
{\bf multigraphs}.

The forgetful functor from symmetric $\mathcal{V}$-multigraphs to
$\mathcal{V}$-multigraphs has a left adjoint $Sym$ defined by
$$(Sym\mathrm{M})_{k}(a_{1},...,a_{k};b)=\underset{\sigma \in \Sigma_{k}}
\coprod \mathrm{M}_{k}(a_{\sigma^{-1}(1)},...,a_{\sigma^{-1}(k)};b)$$ 
where $\Sigma_{k}$ is the symmetric group on $k$ elements.

For each integer $k\geq 0$ we denote by
$\underline{k+1}$ the set $\{1,2,...,k,\ast\}$, where $\ast \not \in
\{1,2,...,k\}$. We have a functor $$(\underline{k+1},\ ):
\mathcal{V}\rightarrow \mathcal{V}\text{-}{\bf Multigraph}$$ given by
$(\underline{k+1},A)_{n}(a_{1},...,a_{n};b)=\emptyset$ unless $n=k$
and $a_{i}=i$ and $b=\ast$, in which case we define it to be $A$. To
give a map of $\mathcal{V}$-multigraphs
$(\underline{k+1},A)\rightarrow \mathrm{M}$ is to give a map
$A\rightarrow \mathrm{M}_{k}(a_{1},...,a_{k};b)$.

Let $\mathcal{V}^{\Delta^{op}}$ have the levelwise
monoidal product. We have a full and faithful functor
$$\varphi':\mathcal{V}^{\Delta^{op}}\text{-}{\bf SymMulticat}
\longrightarrow (\mathcal{V}\text{-}{\bf SymMulticat})^{\Delta^{op}}$$ 
given by $Ob(\varphi'(\mathrm{M})([n]))=Ob(\mathrm{M})$ for all $[n]\in
\Delta$ and $\varphi'(\mathrm{M})([n])_{k}(a_{1},...,a_{k};b)=
\mathrm{M}_{k}(a_{1},...,a_{k};b)([n])$
for each $(k+1)$-tuple of objects $(a_{1},...,a_{k};b)$. 
Clearly, a map $f$ in $\mathcal{V}^{\Delta^{op}}$\text{-}{\bf SymMulticat}
is full and faithful if and only if for each $[n]\in \Delta$,
$\varphi'(f)([n])$ is full and faithful in $\mathcal{V}$\text{-}{\bf SymMulticat}.
The category $\mathcal{V}^{\Delta^{op}}\text{-}{\bf SymMulticat}$ is
coreflective in $(\mathcal{V}\text{-}{\bf SymMulticat})^{\Delta^{op}}$.
The right adjoint to $\varphi'$, which we denote by $R'$,
is defined as follows. For 
$\mathrm{M}\in (\mathcal{V}\text{-}{\bf SymMulticat})^{\Delta^{op}}$ 
we put $Ob(R'(\mathrm{M}))=\lim Ob(\mathrm{M}([n]))$ and
$$R'(\mathrm{M})_{k}((a_{n}^{1}),...,(a_{n}^{k});(b_{n}))([m])
=\mathrm{M}([m])_{k}(a_{m}^{1},...,a_{m}^{k};b_{m})$$ We have a
commutative square of adjunctions
\[
   \xymatrix{
\mathcal{V}^{\Delta^{op}}\text{-}{\bf Cat} \ar @<2pt> [rr]^{E} \ar @<2pt>
[d]^{\varphi} & &\mathcal{V}^{\Delta^{op}}\text{-}{\bf SymMulticat} \ar
@<2pt> [ll]^{(\_)_{1}}
 \ar @<2pt> [d]^{\varphi'} \qquad (3)\\
(\mathcal{V}\text{-}{\bf Cat})^{\Delta^{op}} \ar @<2pt>
[rr]^{E^{\Delta^{op}}} \ar @<2pt> [u]^{R} & &
(\mathcal{V}\text{-}{\bf SymMulticat})^{\Delta^{op}}
 \ar @<2pt> [ll]^{(\_)_{1}^{\Delta^{op}}} \ar @<2pt> [u]^{R'}\\
  }
  \]

\section{The Dwyer-Kan model structure on {\bf S}\text{-}{\bf SymMulticat}}

We denote by {\bf Cat} the category of small categories. We say that an
arrow $f:C\rightarrow D$ of {\bf Cat} is an {\bf isofibration} if for any $x\in Ob(C)$
and any isomorphism $v:y'\rightarrow f(x)$ in $D$, there exists an isomorphism
$u:x'\rightarrow x$ in $C$ such that $f(u)=v$. 

We denote by {\bf S} the category of simplicial sets, regarded as 
having the Quillen model structure. Let $\pi_{0}:{\bf S}\rightarrow Set$ be
the set of connected components functor. By change of base it induces a functor
$\pi_{0}:{\bf S}\text{-}{\bf Cat} \rightarrow {\bf Cat}$ which is the
identity on objects.

\begin{defn} \cite{Be} Let $f:\mathcal{A}\rightarrow \mathcal{B}$
be a morphism in {\bf S}\text{-}{\bf Cat}.

1. The morphism $f$ is a $\mathrm{DK}$-{\bf equivalence} if 
$f$ is locally a weak homotopy equivalence and $\pi_{0}f$ is
essentially surjective.

2. The morphism $f$ is a $\mathrm{DK}$-{\bf fibration} if $f$
is locally a Kan fibration and $\pi_{0}f$ is an isofibration.

3. The morphism $f$ is a {\bf trivial fibration} if $f$ is a
DK-equivalence and a DK-fibration.
\end{defn}
A morphism is a trivial fibration if and only if it is
surjective on objects and locally a trivial fibration.

Let $S$ be a set. We recall \cite[Section 7]{DK2}
that the category {\bf S}\text{-}{\bf Cat}$(S)$ has a
model structure in which the weak equivalences
and the fibrations are the simplicial functors which are
locally a weak homotopy equivalence and a Kan fibration, 
respectively.
\begin{theorem} \cite{Be} The category {\bf S}\text{-}{\bf Cat} 
of simplicial categories admits a cofibrantly generated model 
structure in which the weak equivalences are the
DK-equivalences and the fibrations are the DK-fibrations. A
generating set of trivial cofibrations consists of

(B1) $\{2_{X}\overset{2_{j}}\longrightarrow
2_{Y}\}$, where $j$ is a horn inclusion, and

(B2) inclusions $\mathcal{I} \overset{\delta_{y}} \rightarrow
\mathcal{H}$, where $\{\mathcal{H}\}$ is a set of representatives
for the isomorphism classes of simplicial categories on two objects
which have  countably many simplices in each function complex.
Furthermore, each such $\mathcal{H}$ is required to be cofibrant 
and weakly contractible in {\bf S}\text{-}{\bf Cat}$(\{x,y\})$. 
Here $\{x,y\}$ is the set with elements $x$ and $y$ and 
$\delta_{y}$ omits $y$. 
\end{theorem}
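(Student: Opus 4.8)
The plan is to invoke the recognition principle for cofibrantly generated model categories (recalled in Section~6): it suffices to exhibit sets $I$ and $J$ of maps of $\mathbf{SCat}$ such that $\mathbf{SCat}$ is complete and cocomplete, the class $\mathcal{W}$ of DK-equivalences satisfies two-out-of-three and is closed under retracts, $I$ and $J$ permit the small object argument, $J\text{-cell}\subseteq\mathcal{W}\cap I\text{-cof}$, $I\text{-inj}\subseteq\mathcal{W}\cap J\text{-inj}$, and $\mathcal{W}\cap J\text{-inj}\subseteq I\text{-inj}$. Completeness and cocompleteness follow from the bifibration $Ob\colon\mathbf{SCat}\to Set$ together with the description of its fibres as categories of monoids (2.1); $\mathcal{W}$ inherits two-out-of-three and the retract property from weak homotopy equivalences and from essentially surjective functors; and the small object argument is available since $\mathbf{S}$, hence $\mathbf{SCat}$, is locally presentable. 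As generators I would take
\[
I=\{\emptyset\to\mathcal{I}\}\cup\{\mathcal{F}(2,\partial\Delta^{n})\to\mathcal{F}(2,\Delta^{n})\}_{n\geq 0},\qquad J=(B1)\cup(B2).
\]

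The first step is to compute the injectives. By the free--forgetful adjunction~(1), a map $f\colon\mathcal{A}\to\mathcal{B}$ lifts against $\emptyset\to\mathcal{I}$ exactly when it is surjective on objects, and lifts against $\mathcal{F}(2,\partial\Delta^{n})\to\mathcal{F}(2,\Delta^{n})$ exactly when every $f_{a,a'}\colon\mathcal{A}(a,a')\to\mathcal{B}(f(a),f(a'))$ lifts against $\partial\Delta^{n}\to\Delta^{n}$; hence $I\text{-inj}$ is the class of maps that are surjective on objects and locally a trivial fibration. An elementary verification (using that $\pi_{0}$ of a trivial fibration of simplicial sets is bijective, so as to lift isomorphisms along $\pi_{0}f$) shows this class coincides with the class of maps that are at once DK-equivalences and DK-fibrations. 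Likewise $f$ lifts against (B1) iff every $f_{a,a'}$ is a Kan fibration; and, granting this, $f$ lifts against (B2) iff $\pi_{0}f$ is an isofibration. This last equivalence is the delicate point in computing $J\text{-inj}$: the countability restriction on the $\mathcal{H}$'s is imposed precisely so that they form a set while injectivity against the resulting maps $\mathcal{I}\to\mathcal{H}$ still detects the isofibration condition---from an isomorphism in $\pi_{0}\mathcal{B}$ and a chosen lift of its source one builds a simplicial functor out of a suitable such $\mathcal{H}$ and applies (B2)-injectivity. Thus $J\text{-inj}$ is exactly the class of DK-fibrations, so $I\text{-inj}=\mathcal{W}\cap J\text{-inj}$ and the two inclusions $I\text{-inj}\subseteq\mathcal{W}\cap J\text{-inj}\subseteq I\text{-inj}$ required by the recognition principle both hold.

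It remains to establish $J\text{-cell}\subseteq\mathcal{W}\cap I\text{-cof}$. Containment in $I\text{-cof}$ reduces to $(B1)\cup(B2)\subseteq I\text{-cof}$, which one reads off the description of $I\text{-inj}$ (for (B2) one uses the hypothesis that $\mathcal{H}$ is cofibrant in $\mathbf{SCat}(\{x,y\})$, together with the observation that pushouts of the maps in $I$ along object-identifying maps suffice to build the transferred cofibrations on a fixed pair of objects). That the generators lie in $\mathcal{W}$ is checked directly: each map in (B1) is bijective on objects and, since $\mathcal{F}(2,X)(0,1)=X$ while $\mathcal{F}(2,X)(0,0)=\mathcal{F}(2,X)(1,1)=\Delta^{0}$ and $\mathcal{F}(2,X)(1,0)=\emptyset$, is locally a horn inclusion or an identity, hence a DK-equivalence; and each $\mathcal{I}\to\mathcal{H}$ in (B2) is a DK-equivalence because weak contractibility of $\mathcal{H}$ in $\mathbf{SCat}(\{x,y\})$ forces all function complexes of $\mathcal{H}$ to be contractible and $x,y$ to become isomorphic in $\pi_{0}\mathcal{H}$, so that $\pi_{0}\mathcal{H}$ is equivalent to $\pi_{0}\mathcal{I}$.

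The main obstacle is to prove that DK-equivalences are stable under the pushouts and transfinite compositions occurring in $J\text{-cell}$. A pushout of a (B1)-generator is bijective on objects (as $Ob$ preserves colimits), hence lies in a fibre $\mathbf{SCat}(\mathcal{O})$, where it can be realized as the pushout of $\mathcal{F}_{\mathcal{O}}$ applied to a trivial cofibration of $\mathcal{O}$-graphs; since $\mathbf{SCat}(\mathcal{O})$ carries the model structure transferred along the free-monoid adjunction of~(2.1), such a pushout is a trivial cofibration there, in particular a DK-equivalence. Concretely, one filters the affected function complexes of the pushout and identifies the successive stages with pushout-products of a horn inclusion against fixed simplicial sets, which are trivial cofibrations because $\mathbf{S}$ is a monoidal model category; left properness of $\mathbf{S}$ then completes the argument. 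A pushout of a (B2)-generator along $\mathcal{I}\to\mathcal{A}$ selecting an object $a$ adjoins exactly one new object $y$ together with morphisms witnessing $y\simeq a$; a direct computation of this pushout shows the inclusion $\mathcal{A}\to\mathcal{A}\cup_{\mathcal{I}}\mathcal{H}$ is locally a weak equivalence, while it is homotopy essentially surjective because $y$ becomes isomorphic to $a$ in $\pi_{0}$. Stability under transfinite composition is then formal ($\pi_{0}$ and weak equivalences of simplicial sets both commute with filtered colimits, and composites of essentially surjective functors are essentially surjective). With all the hypotheses of the recognition principle verified, one obtains the asserted cofibrantly generated model structure on $\mathbf{SCat}$, with $\mathcal{W}$ the weak equivalences, the DK-fibrations the fibrations, and $J=(B1)\cup(B2)$ a generating set of trivial cofibrations.
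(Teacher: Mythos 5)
This statement is quoted from Bergner's paper \cite{Be}; the present paper gives no proof of it, so there is nothing internal to compare against except the parallel proof of Theorem 4.5, whose architecture your outline does correctly mirror: recognition principle, generators $I$ and $J$ as you list them, identification of $I$-inj with the trivial fibrations via the free--forgetful adjunction, and reduction of $J$-cell $\subseteq \mathcal{W}$ to the two families of generators. Those parts of your sketch are sound.

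However, the three places where you wave your hands are precisely where all of the work in Bergner's proof lives, so as written the proposal has genuine gaps. First, to get $\mathcal{W}\cap J\text{-inj}=I\text{-inj}$ together with the identification of the fibrations as the DK-fibrations you need \emph{both} directions of ``$J$-inj $=$ DK-fibrations''; you sketch only that (B2)-injectivity yields the isofibration condition (and even there, producing a map from one of the chosen $\mathcal{H}$'s realizing a given isomorphism of $\pi_{0}\mathcal{B}$ is where the cardinality, cofibrancy and contractibility constraints on the set $\{\mathcal{H}\}$ earn their keep), and you say nothing about why a DK-fibration actually lifts against $\mathcal{I}\to\mathcal{H}$. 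Second, and most seriously, the claim that ``a direct computation of this pushout shows the inclusion $\mathcal{A}\to\mathcal{A}\cup_{\mathcal{I}}\mathcal{H}$ is locally a weak equivalence'' is not a computation one can do directly: adjoining the new object $y$ changes the function complexes between the \emph{old} objects as well (new maps factor through $y$ and back), and the hom-objects of the pushout are given by coends and free products whose homotopical analysis is the heart of the matter. This is exactly the step the present paper must re-establish in the multicategory setting, and it costs Lemma 4.6 (factoring $\delta_{y}$ through $u^{*}\mathcal{H}$ and taking consecutive pushouts), Lemma 4.7 (cofibrancy of $u^{*}\mathcal{H}$ as a simplicial monoid), Proposition 7.1 (the explicit coend description of pushouts along full and faithful inclusions), and Lemmas 4.8--4.9. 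Without an argument of this kind your containment $J\text{-cell}\subseteq\mathcal{W}$ is unproved.
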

Recall from 2.2, adjunction (2), the functor 
$(-)_{1}:{\bf S}\text{-}{\bf SymMulticat}
\rightarrow {\bf S}\text{-}{\bf Cat}$.
\begin{defn} Let $f:\mathrm{M}\rightarrow \mathrm{N}$ be a
morphism in {\bf S}\text{-}{\bf SymMulticat}.

1. \cite[Definition 12.1]{EM} The morphism $f$ is a 
{\bf weak equivalence} if $f$ is locally a weak homotopy 
equivalence and $\pi_{0}f_{1}$ is essentially surjective.

2. The morphism $f$ is a {\bf fibration} if $f$ is locally a 
Kan fibration and $\pi_{0}f_{1}$ is an isofibration.

3. The morphism $f$ is a {\bf trivial fibration} if $f$ is a
weak equivalence and a fibration.
\end{defn}

Recall from 2.2, adjunction (2), the functor 
$E:{\bf S}\text{-}{\bf Cat} \rightarrow 
{\bf S}\text{-}{\bf SymMulticat}$.
\begin{lemma}
A morphism $f:\mathrm{M}\rightarrow \mathrm{N}$
in {\bf S}\text{-}{\bf SymMulticat} is a 

(1) weak equivalence if and only if $f$ is locally a weak 
homotopy equivalence and $f_{1}$ is a weak equivalence in 
{\bf S}\text{-}{\bf Cat};

(2) fibration if and only if $f$ is locally a Kan fibration and 
$f_{1}$ is a fibration in {\bf S}\text{-}{\bf Cat};

(3) trivial fibration if and only if $f$ is locally a trivial fibration
and $f_{1}$ is a trivial fibration in {\bf S}\text{-}{\bf Cat}.

It follows easily that $(-)_{1}$ and $E$ both preserve
weak equivalences and fibrations.
\end{lemma}
Our main result is
\begin{theorem} The category {\bf S}\text{-}{\bf SymMulticat} 
admits a cofibrantly generated model category
structure with weak equivalences and fibrations as in the 
previous definition. The model structure is right proper.
\end{theorem}
\begin{proof} 
The proof will depend on Lemmas 3.6, 3.7, 3.8 and 3.9
which appear below. We take as the set of generating cofibrations
the set $\mathrm{I}$ consisting of $E(\emptyset \rightarrow \mathcal{I})
\cup \{\mathcal{F}Sym(\underline{k+1},i)\}_{k\geq 0}$, where
$i$ is a generating cofibration of $\mathbf{S}$. We take as the 
set of generating trivial cofibrations the set $\mathrm{J}$ 
consisting of $E(B2) \cup \{\mathcal{F}Sym(\underline{k+1},j)\}_{k\geq 0}$, 
where $j$ is a horn inclusion. Writing $\mathrm{W}$ for the 
class of weak equivalences, we see by Lemma 3.4 that 
$\mathrm{W}\cap \mathrm{J}\text{-}inj=\mathrm{I}\text{-}inj$.
By \cite[11.3.1]{Hi} it will suffice to show that 
$\mathrm{J}$\text{-}$cof\subset \mathrm{W}$.

The composite forgetful functor from 
${\bf S}\text{-}{\bf SymMulticat}$ to 
${\bf S}\text{-}{\bf Multigraph}$ preserves filtered 
colimits. This can be seen, for example, by adapting the 
proof of the corresponding fact for enriched categories 
\cite[Corollary 3.4]{KL}. Since a transfinite composition 
of weak homotopy equivalences is a weak homotopy equivalence, 
the next lemma completes the proof of the existence of 
the model structure. Right properness is standard, 
see for example \cite[Proposition 3.5]{Be}.
\end{proof}
For the interested reader, the class of cofibrations of 
the model structure constructed in Theorem 3.5
can be given an explicit description \cite{St}.
Since we shall not need this description, we won't
go into details.
\begin{lemma} 
Let $\delta_{y}:\mathcal{I}\rightarrow \mathcal{H}$ be
a map belonging to the set B2 from Theorem 3.2.
Then in the pushout diagram
\[
\xymatrix{
E\mathcal{I} \ar[r]^{x} \ar[d]_{E\delta_{y}} & \
\mathrm{M} \ar[d]\\
E\mathcal{H} \ar[r] & \mathrm{N}\\
}
  \]
the map $\mathrm{M}\rightarrow \mathrm{N}$ is a 
weak equivalence.
\end{lemma}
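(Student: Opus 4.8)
The plan is to reduce the statement to a computation inside a single fibre category $\mathbf{SSymMulticat}(S)$, where $S = Ob(\mathrm{M}) \sqcup \{y\}$ (the pushout adds exactly one new object, the image of $y$, since $\delta_y$ omits $y$ and is the identity on the remaining object $x$). First I would observe that a multi DK-equivalence which is bijective on objects is the same thing as a map that is locally a weak homotopy equivalence; homotopy essential surjectivity of $f_1$ is automatic once the map is a bijection on objects. So it suffices to show that $\mathrm{M} \to \mathrm{N}$ is locally a weak homotopy equivalence.

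Next I would make the pushout explicit. Because $E$ is a left adjoint (3.2(3)) it preserves the pushout, but more usefully, because $Ob$ is a bifibration and $E\delta_y$ is injective on objects, the pushout $\mathrm{N}$ can be computed by first pushing $\mathrm{M}$ forward along the object inclusion $Ob(\mathrm{M}) \hookrightarrow S$ to get $v_!\mathrm{M}$ (described by the displayed formula in the Remark, with the new object $y$ carrying only the unit on its length-one endo-hom), and then forming the pushout inside the fibre $\mathbf{SSymMulticat}(S)$ along the map $E\mathcal{I}_S \to E\mathcal{H}_S$ obtained by pushing forward $E\delta_y$. Here $\mathcal{H}$ lives in $\mathbf{SCat}(\{x,y\})$ and $E$ applied to it produces a symmetric simplicial multigraph concentrated in arity one. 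The key structural point is that $v_!\mathrm{M}$ and $E\mathcal{H}_S$ agree on all of $\mathrm{M}$'s original data and differ only in the "new" hom-objects involving $y$, and the multicategory structure only ever composes a length-one arrow into an arity slot, so arrows into or out of $y$ of arity $\geq 2$ are never created.

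The heart of the argument is then to show the pushout map is a local weak equivalence arity by arity. I would argue this by a free-monoid / filtered-colimit analysis in $\mathbf{SSymMulticat}(S)$: the pushout of monoids $v_!\mathrm{M} \to \mathrm{N}$ along a map of monoids arising from a map of underlying graphs can be built as a sequential colimit of pushouts in the category of symmetric $\mathbf{S}$-multigraphs over $S$ (using that the monoidal product on the fibre preserves filtered colimits, as noted in 3.1, together with the standard "free monoid filtration" description of pushouts of monoids). At each stage the attaching maps are built out of $E\delta_y$, which is a DK-equivalence in $\mathbf{SCat}$ and hence, by Lemma 4.4, $E\delta_y$ is a multi DK-equivalence; combined with the cofibrancy and weak contractibility of $\mathcal{H}$ in $\mathbf{SCat}(\{x,y\})$ one shows each stage of the filtration adds a weakly trivial piece. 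This mirrors Bergner's proof of the analogous statement for $\mathbf{SCat}$ (\cite{Be}, Prop. 3.6 and the surrounding lemmas) almost verbatim, since the only arrows touched are arity-one.

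The main obstacle I expect is controlling the combinatorics of the free-monoid filtration in the multicategorical setting: one must check that, because $\mathcal{H}$ only has new arrows in arity one and composition inserts a single object into one input slot, attaching a copy of $\mathcal{H}$ never mixes in a higher-arity hom-object in a way that breaks the "weakly trivial cofibration" property at each stage. Once one verifies that the $k$-ary hom-objects $\mathrm{N}_k(a_1,\dots,a_k;b)$ for a $(k{+}1)$-tuple of original objects are unchanged when none of the $a_i, b$ is $y$, and are built from $\mathrm{M}$ and copies of the contractible pieces of $\mathcal{H}$ via the same colimit bookkeeping as in Bergner's case when some entry equals $y$, the weak equivalence claim follows from the contractibility hypothesis on $\mathcal{H}$ and the left properness / gluing lemma for simplicial sets. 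I would present this as: reduce to the fibre, invoke the free-monoid pushout description, then cite Lemma 4.4 and the Bergner-style estimate to finish.
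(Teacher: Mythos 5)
Your opening reduction is already slightly off: $\mathrm{M}\to\mathrm{N}$ is \emph{not} bijective on objects (it adds the image of $y$), so the observation that bijective-on-objects multi DK-equivalences coincide with local weak equivalences does not let you discard homotopy essential surjectivity. That part is easily repaired (weak contractibility of $\mathcal{H}$ forces $y$ to become isomorphic to the image of $x$ in $\pi_{0}\mathrm{N}_{1}$), but it foreshadows the real problem.

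The heart of your argument --- push $\mathrm{M}$ forward to the fibre over $S=Ob(\mathrm{M})\sqcup\{y\}$ and run a free-monoid filtration in which ``each stage adds a weakly trivial piece'' --- does not go through as stated. Once you work in a fixed object set containing both $x$ and $y$, the map you are attaching is (the pushforward of) $\mathcal{I}\to\mathcal{H}$, and this is \emph{not} a local weak equivalence in the fibre: $\mathcal{H}(x,y)$ is weakly contractible but nonempty, whereas it starts out as $\emptyset$. So the standard ``pushout of a trivial cofibration of monoids in the fibre is locally a weak equivalence'' machinery, which is all a free-monoid filtration can deliver, is not applicable; the assertion that each filtration stage is weakly trivial is precisely the hard content of the lemma, and Lemma 4.4 (which only says that $E$ sends DK-equivalences to multi DK-equivalences, nothing about pushouts) does not supply it. The paper circumvents exactly this difficulty by a different two-step factorisation: first push out along $E(\delta_{y})^{u}$, where $(\delta_{y})^{u}:\mathcal{I}\to u^{*}\mathcal{H}$ \emph{is} a trivial cofibration of simplicial monoids (this is where Lemma 4.7, the cofibrancy of the monoid $u^{*}\mathcal{H}$, enters), so that $\mathrm{M}\to\mathrm{M}'$ is a bijective-on-objects local weak equivalence; then push out along the full and faithful inclusion $Eu^{*}\mathcal{H}\hookrightarrow E\mathcal{H}$ adding the single object $y$, and show by the explicit coend computation of Lemmas 4.8--4.9 and Proposition 7.1 (applied degreewise via the adjunction 3.6(4)) that $\mathrm{M}'\to\mathrm{N}$ is a full and faithful inclusion, hence a multi DK-equivalence once essential surjectivity is noted. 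To salvage your route you would have to reprove Bergner's pushout estimate from scratch in the multicategorical setting, which is exactly the work this factorisation is designed to avoid.
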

\begin{proof}
We begin with a remark. For every set $S$
the category {\bf S}\text{-}{\bf SymMulticat}$(S)$ 
admits a model structure in which the weak 
equivalences and fibrations are defined locally
\cite[Theorem 2.1]{BM}. The adjunction (2) 
from 2.2 restricts to a Quillen pair
$$E:{\bf S}\text{-}{\bf Cat}(S)\rightleftarrows 
{\bf S}\text{-}{\bf SymMulticat}(S):(-)_{1}$$
We now factor (2.1) the map $\delta_{y}$ as $\mathcal{I}
\overset{(\delta_{y})^{u}} \longrightarrow u^{*}\mathcal{H}
\rightarrow \mathcal{H}$ where $u=Ob(\delta_{y})$ and 
then we take consecutive pushouts:
\[
\xymatrix{
E\mathcal{I} \ar[r]^{x} \ar[d]_{E(\delta_{y})^{u}} & 
\mathrm{M} \ar[d]^{j}\\
Eu^{*}\mathcal{H} \ar[d] \ar[r] & \mathrm{M'} \ar[d]\\
E\mathcal{H} \ar[r] & \mathrm{N}\\
}
  \]
By Lemma 3.7 the map $(\delta_{y})^{u}$ is a 
trivial cofibration in the category of simplicial monoids, 
therefore the map $j$ is a trivial cofibration in 
{\bf S}\text{-}{\bf SymMulticat}$(Ob(\mathrm{M}))$. 
We claim that $\mathrm{M'}\rightarrow \mathrm{N}$ is a full
and faithful inclusion. For, apply the functor $\varphi'$ from
2.2 to the bottom pushout diagram above. 
Using adjunction (3) from 2.2 we obtain a pushout diagram
\[
\xymatrix{
E^{\Delta^{op}}\varphi(u^{*}\mathcal{H}) \ar[d] \ar[r] & 
\varphi'\mathrm{M'} \ar[d]\\
E^{\Delta^{op}}\varphi(\mathcal{H}) \ar[r] & \varphi'\mathrm{N}\\
}
  \]
in {\bf SymMulticat}$^{\Delta^{op}}$. Evaluating at 
$[n]\in \Delta$ and applying Lemma 3.8 to the resulting
pushout diagram we obtain the claim.

Therefore, the map $\mathrm{M}\rightarrow \mathrm{N}$
is a locally a weak homotopy equivalence, and an easy
diagram chase shows that it is actually a weak equivalence.
\end{proof}
\begin{lemma}
Let $\mathcal{A}$ be a cofibrant simplicial category. 
Then for each $a\in Ob(\mathcal{A})$ the simplicial 
monoid $a^{\ast}\mathcal{A}=\mathcal{A}(a,a)$
is cofibrant.
\end{lemma}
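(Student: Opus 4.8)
The plan is to reduce to the case of a cell complex and then to read off the endomorphism monoid one simplicial degree at a time.

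First I would reduce to a cell complex. Every cofibrant object of the model category of Theorem 4.2 is a retract of an object built from $\emptyset$ by transfinitely attaching generating cofibrations, for which one may take $\emptyset\to\mathcal I$ together with the maps $\mathcal F(2,\partial\Delta[n])\to\mathcal F(2,\Delta[n])$, $n\ge 0$ (these jointly detect the trivial fibrations). If $i\colon\mathcal A\to\mathcal A'$ and $r\colon\mathcal A'\to\mathcal A$ realize such a retract then, since $r(i(a))=a$, the components $i_{a,a}$ and $r_{i(a),i(a)}$ realize $a^{\ast}\mathcal A$ as a retract of $i(a)^{\ast}\mathcal A'$ in the category of simplicial monoids; as retracts of cofibrant objects are cofibrant, it suffices to treat cell complexes. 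Finally the copies of $\emptyset\to\mathcal I$ may be attached first, so I may assume $\mathcal A=\operatorname{colim}_{\beta<\lambda}\mathcal A_\beta$ with $\mathcal A_0=\mathcal I_S$ the discrete simplicial category on $S:=Ob(\mathcal A)$ and each $\mathcal A_\beta\to\mathcal A_{\beta+1}$ a pushout of some $\mathcal F(2,\partial\Delta[n_\beta])\to\mathcal F(2,\Delta[n_\beta])$; all the $\mathcal A_\beta$ then lie in $\mathbf{SCat}(S)$.

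Next I would show that such an $\mathcal A$ is degreewise free. Identifying $\mathbf{SCat}(S)$ with the category of simplicial objects in $\mathbf{Cat}(S)$ (small categories with object set $S$ and identity-on-objects functors), the colimit and the pushouts above are computed degreewise in $\mathbf{Cat}(S)$. In degree $m$ the map $\mathcal F(2,\partial\Delta[n])_m\to\mathcal F(2,\Delta[n])_m$ is an isomorphism for $m<n$ and, for $m\ge n$, is the inclusion of a free category into the free category obtained by adjoining some generating arrows $0\to1$; hence the degree-$m$ pushout merely adjoins to $\mathcal A_{\beta,m}$ a set of free generating arrows (the image of the top simplex of $\Delta[n]$ and its degeneracies). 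Since $\mathcal A_{0,m}=\mathcal I_{S,m}$ is free on the empty graph, and a filtered colimit of free categories along functors induced by inclusions of generating graphs is free on the union graph, induction on $\beta$ yields: for every $m$, $\mathcal A_m$ is a free category on a subgraph $G_m\subseteq\mathcal A_m$ with $s_i(G_m)\subseteq G_{m+1}$; i.e.\ $\mathcal A$ is a free simplicial category in the sense of Dwyer and Kan.

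Finally I would pass from $\mathcal A$ to $a^{\ast}\mathcal A$. In a category free on $G_m$, every endomorphism of $a$ decomposes uniquely into loops at $a$ that do not revisit $a$ internally, so $a^{\ast}\mathcal A_m=\mathcal A_m(a,a)$ is the free monoid on the set $L_m$ of such loops of positive length; since degeneracies take generating arrows of $G$ to generating arrows and preserve the sequence of vertices a path visits, $s_i(L_m)\subseteq L_{m+1}$. Thus $a^{\ast}\mathcal A$ is, as a simplicial monoid, degreewise free compatibly with degeneracies, and its skeletal filtration exhibits it as a cell complex of simplicial monoids: with $M^{(k)}$ the sub-simplicial-monoid generated by the nondegenerate simplices of $L$ of degree $\le k$, one has $M^{(0)}$ free on a set, $M^{(k)}$ obtained from $M^{(k-1)}$ by pushing out a coproduct of free simplicial monoids on the inclusions $\partial\Delta[k]\to\Delta[k]$---one copy for each nondegenerate $k$-simplex of $L$, whose faces lie in $M^{(k-1)}$ because $M^{(k-1)}$ agrees with $a^{\ast}\mathcal A$ in degrees below $k$---and $a^{\ast}\mathcal A=\operatorname{colim}_k M^{(k)}$. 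Hence $a^{\ast}\mathcal A$ is cofibrant. The step I expect to require the most care is the degreewise freeness: that pushouts in $\mathbf{SCat}$ become degreewise pushouts in $\mathbf{Cat}(S)$ once the object set is fixed, that attaching an $\mathcal F(2,-)$-cell introduces in each degree only \emph{free} generating arrows (the relations tying these together come from face operators and thus live between degrees, imposing nothing within a degree), and that the degeneracies of the new cell are again generating arrows. Granting this, the retract reduction, the first-return decomposition, and the fact that a degreewise-free simplicial monoid is cofibrant are routine.
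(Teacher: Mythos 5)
Your proof is correct and follows essentially the same route as the paper's: both reduce to (degreewise) free simplicial categories and then observe that $\mathcal{A}_m(a,a)$ is the free monoid on the loops at $a$ that do not pass through $a$ internally, compatibly with degeneracies. The only difference is that you rederive from the cell-complex description and the skeletal filtration the two facts the paper simply cites from Dwyer--Kan (cofibrant objects are retracts of free simplicial categories, and free implies cofibrant), so your argument is longer but self-contained, and your formulation of the generating loops is in fact the cleaner one.
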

\begin{proof}
Let $S=Ob(\mathcal{A})$. $\mathcal{A}$ is cofibrant if and only if it is
cofibrant as an object of {\bf S}\text{-}{\bf Cat}$(S)$. The cofibrant objects
of {\bf S}\text{-}{\bf Cat}$(S)$ are characterized in \cite[7.6]{DK2}: they
are the retracts of free simplicial categories. Therefore it
suffices to prove that if $\mathcal{A}$ is a free simplicial
category then $a^{\ast}\mathcal{A}$ is a free simplicial category
for all $a\in S$. Recall \cite[4.5]{DK2} that $\mathcal{A}$ is a
free simplicial category if and only if $(i)$ for all $n\geq 0$ the category
$\varphi(\mathcal{A})_{n}$ (see section 2 for the functor $\varphi$) 
is a free category on a graph $G_{n}$, and $(ii)$ for all 
epimorphisms $\alpha:[m]\rightarrow [n]$
of $\Delta$, $\alpha^{\ast}:\varphi(\mathcal{A})_{n}\rightarrow
\varphi(\mathcal{A})_{m}$ maps $G_{n}$ into $G_{m}$.

Let $a\in S$. The category $\varphi(a^{\ast}\mathcal{A})_{n}$ is a
full subcategory of $\varphi(\mathcal{A})_{n}$ with object set
$\{a\}$, and we will show that it is free as well. A set
$G^{a^{\ast}\mathcal{A}}_{n}$ of generators can be described as
follows. An element of $G^{a^{\ast}\mathcal{A}}_{n}$ is a path 
from $a$ to $a$ in $\varphi(\mathcal{A})_{n}$ such that every 
arrow in the path belongs to $G_{n}$ and $a$ does not 
appear anywhere else in the path. Since every epimorphism 
$\alpha:[m]\rightarrow [n]$ of $\Delta$ has a section, 
$\alpha^{\ast}$ maps $G^{a^{\ast}\mathcal{A}}_{n}$ 
into $G^{a^{\ast}\mathcal{A}}_{m}$.
\end{proof}
\begin{lemma}
Let $A$ and $B$ be two small categories and let $i:A\rightarrow
B$ be a full and faithful inclusion. Let $\mathrm{M}$ be a
multicategory. Then in a pushout diagram of the form
\[
\xymatrix{
EA \ar[d]_{Ei} \ar[r] & \mathrm{M} \ar[d]\\
EB \ar[r] & \mathrm{N}\\
}
  \]
the map $\mathrm{M}\rightarrow \mathrm{N}$ is a full 
and faithful inclusion.
\end{lemma}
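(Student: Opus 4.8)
The plan is to compute the pushout $\mathrm{N}$ explicitly, along the lines of the Fritsch--Latch description of pushouts of small categories (\cite{FL}, Prop. 5.2) and of its extension to enriched categories in section 7, and then to read off the two assertions. Throughout, ``full and faithful inclusion'' means injective on objects and locally an isomorphism in the sense of 3.3.

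Since $Ob$ is a Grothendieck bifibration and $Ob(Ei)=Ob(i)$ is injective, the pushout of the underlying object sets is $Ob(\mathrm{N})=Ob(\mathrm{M})\amalg(Ob(B)\setminus Ob(A))$, with $Ob(\mathrm{M})\to Ob(\mathrm{N})$ the coproduct inclusion; in particular $\mathrm{M}\to\mathrm{N}$ is injective on objects. Write $T=Ob(\mathrm{M})$ and $S'=Ob(B)\setminus Ob(A)$, regard $Ob(A)$ as a subset of $Ob(B)$, and identify $T$ and $S'$ with their images in $Ob(\mathrm{N})$. It then remains to show that for every $(k+1)$-tuple $(a_1,\dots,a_k;b)$ of objects of $\mathrm{M}$ the map $\mathrm{M}_k(a_1,\dots,a_k;b)\to\mathrm{N}_k(a_1,\dots,a_k;b)$ is a bijection.

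Surjectivity is the heart of the matter. Because $\mathrm{N}$ is the pushout, every $k$-morphism of $\mathrm{N}$ is obtained, by the composition of $\mathrm{N}$, from images of morphisms of $\mathrm{M}$ and of morphisms of $B$, the latter being unary. In such a composite consider a maximal chain of consecutive morphisms coming from $B$; composing it inside $B$ yields a single morphism $f\in B(p,q)$. If the $k$-morphism we started from has all its inputs and its output in $T$, then $p$ is either one of these inputs or the output of an $\mathrm{M}$-morphism, and $q$ is either the output or an input of an $\mathrm{M}$-morphism, so $p,q\in T$. The objects of $B$ mapping into $T$ are exactly those of $A$, hence $p,q\in Ob(A)$ and $f\in B(p,q)=A(p,q)$ because $i$ is full. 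By commutativity of the pushout square the image of $f$ in $\mathrm{N}$ comes from $\mathrm{M}$; replacing every such chain by the corresponding morphism of $\mathrm{M}$ and using that $\mathrm{M}\to\mathrm{N}$ preserves composition, the original $k$-morphism is exhibited as the image of a single morphism of $\mathrm{M}$. Hence $\mathrm{M}_k(a_1,\dots,a_k;b)\to\mathrm{N}_k(a_1,\dots,a_k;b)$ is onto.

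For injectivity I would produce the pushout by hand. One builds a symmetric multicategory $\mathrm{N}'$ with $Ob(\mathrm{N}')=T\amalg S'$, setting $\mathrm{N}'_k(a_1,\dots,a_k;b)=\mathrm{M}_k(a_1,\dots,a_k;b)$ whenever $a_1,\dots,a_k,b\in T$ and, for the remaining tuples, defining $\mathrm{N}'_k$ by an explicit colimit whose terms are obtained by grafting unary morphisms of $B$, each with at least one end in $S'$, onto the inputs and the output of a $k$-morphism of $\mathrm{M}$; this is the multicategorical counterpart of the ``path'' construction behind (\cite{FL}, Prop. 5.2) and prop. 7.1. Composition in $\mathrm{N}'$ is concatenation of such graftings followed by the normalisation used for surjectivity, and the symmetric group actions are carried by the central $\mathrm{M}$-factor. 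The evident multifunctors $\mathrm{M}\to\mathrm{N}'$ and $EB\to\mathrm{N}'$ agree on $EA$, so they induce $\mathrm{N}\to\mathrm{N}'$; the composite $\mathrm{M}\to\mathrm{N}\to\mathrm{N}'$ is the inclusion, which is the identity on hom-objects between objects of $T$, so $\mathrm{M}_k(a_1,\dots,a_k;b)\to\mathrm{N}_k(a_1,\dots,a_k;b)$ is injective. Together with the previous paragraph this gives the required bijections, and hence $\mathrm{M}\to\mathrm{N}$ is a full and faithful inclusion. (For $k=1$ one can instead invoke (\cite{FL}, Prop. 5.2) or prop. 7.1 directly, since $(\_)_1$ takes this pushout to the pushout $\mathrm{M}_1\cup_A B$ in $\mathbf{Cat}$.) The main obstacle is the verification that $\mathrm{N}'$ really is a symmetric multicategory with the universal property of the pushout; the associativity and equivariance checks carry most of the bookkeeping, and the possible non-injectivity of $Ob(g)$, where $g:EA\to\mathrm{M}$ is the given map, is the delicate point, being absorbed by the colimit defining the new hom-objects. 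This runs in parallel with section 7.
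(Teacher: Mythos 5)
Your strategy is, at bottom, the same as the paper's: exhibit an explicit Fritsch--Latch-style model of the pushout in which the hom-objects over old tuples are untouched. But the organisation differs in ways worth comparing. The paper first reduces to the case $Ob(B)=Ob(A)\cup\{q\}$ by writing $B$ as the filtered colimit of the full subcategories $A_S$ over finite $S\subseteq Ob(B)-Ob(A)$ (filtered colimits of full and faithful inclusions of multicategories are again such), and then uses Prop.~7.1 to reduce further to the case where the attaching map is the counit $\epsilon_{\mathrm{M}}:E\mathrm{M}_1\to\mathrm{M}$; only after these two reductions are the coend formulas of Lemma~4.9 written down. These reductions are not cosmetic: your $\mathrm{N}'$, built directly for a general $B$ and a general $g:EA\to\mathrm{M}$, must handle hom-objects with several new objects scattered among the inputs and output, and (as in Case~2 of Prop.~7.1, where $\mathcal{D}(p,q)$ for two new objects is itself a pushout, not a bare coend) the formulas there are genuinely more involved than ``graft unary $B$-morphisms onto an $\mathrm{M}$-morphism.'' I would import both reductions before attempting the explicit construction. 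On the other hand, your generators-and-relations surjectivity argument is a genuine addition not in the paper, and it buys you something real: for faithfulness you then only need $\mathrm{N}'$ to be a well-defined cocone under the span (so that $\mathrm{N}\to\mathrm{N}'$ exists and retracts $\mathrm{M}_k\to\mathrm{N}_k$), not that $\mathrm{N}'$ actually \emph{is} the pushout. Both you and the paper defer the ``long and tedious'' verification that the explicit construction satisfies the multicategory axioms, so on that score you are no worse off.

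One concrete error to fix: the parenthetical claim that $(\_)_1$ takes the pushout to $\mathrm{M}_1\cup_A B$ is unjustified. In the adjunction 3.2(3) the functor $E$ is the \emph{left} adjoint and $(\_)_1$ the right adjoint, so $(\_)_1$ has no reason to preserve pushouts; that $\mathrm{N}_1\cong\mathrm{M}_1\cup_A B$ is a \emph{consequence} of the explicit description of $\mathrm{N}$, not something you can invoke to shortcut the $k=1$ case.
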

\begin{proof}
Let $(B-A)^{+}$ be the preorder with objects all finite subsets
$S\subset Ob(B)-Ob(A)$, ordered by inclusion. For $S\in
(B-A)^{+}$, let $A_{S}$ be the full subcategory of $B$ with objects
$Ob(A)\cup S$. Then $B=colim_{(B-A)^{+}} A_{S}$. On the other
hand, a filtered colimit of full and faithful inclusions of
multicategories is a full and faithful inclusion. This is because
the forgetful functor from {\bf SymMulticat} to
{\bf SymMultigraph} preserves filtered colimits (as can 
be seen by adapting the proof of the corresponding 
fact for enriched categories \cite[Corollary 3.4]{KL}, for example)
and a filtered colimit of full and faithful inclusions of multigraphs is 
a full and faithful inclusion. Therefore one can assume 
that $Ob(B)=Ob(A)\cup \{q\}$, where $q\not \in Ob(A)$. 

The pushout in the statement of the lemma is the composite pushout
\[
\xymatrix{
EA \ar[d]_{Ei} \ar[r] & E\mathrm{M}_{1}
\ar[d] \ar[r]^{\epsilon_{M}} & \mathrm{M} \ar[d]\\
EB \ar[r] & EC \ar[r] & \mathrm{N}\\
}
  \]
where the first square on the left is a pushout in 
{\bf Cat} before applying the functor $E$ and
$\epsilon_{\mathrm{M}}$ is the counit of the adjunction (2)
from 2.2 (with $\mathcal{V}=Set$). Recall now that the 
pushout of a full and faithful inclusion of categories along
any functor is a full and faithful inclusion \cite[Proposition 5.2]{FL}.
Therefore it is enough to consider the following situation:
$\mathrm{M}$ is a multicategory, 
$i:\mathrm{M}_{1}\rightarrow B$ 
is a full and faithful inclusion with 
$Ob(B)=Ob(\mathrm{M})\sqcup \{q\}$
and the pushout diagram is
\[
\xymatrix{
E\mathrm{M}_{1} \ar[d]_{Ei} \ar[r]^{\epsilon_{M}} & 
\mathrm{M} \ar[d]\\
EB \ar[r] & \mathrm{N}\\
}
  \]
The fact that $\mathrm{M}\rightarrow \mathrm{N}$ 
is a full and faithful inclusion follows then by taking 
$\mathcal{V}=Set$ in the next lemma.
\end{proof}
\begin{lemma} Let $\mathcal{V}$ be a cocomplete 
closed symmetric monoidal category. 
Let $\mathrm{M}$ be a small symmetric 
$\mathcal{V}$-multicategory, $\mathrm{M}_{1}$ its 
underlying $\mathcal{V}$ category, 
$B$ a small $\mathcal{V}$-category with
$Ob(B)=Ob(\mathrm{M})\sqcup \{q\}$ and 
$i:\mathrm{M}_{1}\rightarrow B$ a full and faithful 
inclusion. Then in a pushout diagram of the form
\[
\xymatrix{
E\mathrm{M}_{1} \ar[d]_{Ei} \ar[r]^{\epsilon_{M}} 
& \mathrm{M} \ar[d]\\
EB \ar[r] & \mathrm{N}\\
}
  \]
the map $\mathrm{M}\rightarrow \mathrm{N}$ is a full and faithful
inclusion. Here $\epsilon_{\mathrm{M}}$ is the counit of the
adjunction (2) from 2.2.
\end{lemma}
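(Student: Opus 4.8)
The plan is to construct $\mathrm{N}$ explicitly as a symmetric $\mathcal{V}$-multicategory on object set $Ob(\mathrm{M})\sqcup\{q\}$ and verify the universal property of the pushout, then read off that $\mathrm{M}\to\mathrm{N}$ is full and faithful. The key observation is that $EB$ contributes only unary morphisms, and the pushout is along the counit $\epsilon_{\mathrm M}\colon E\mathrm{M}_1\to\mathrm{M}$, which is the identity on objects and is the inclusion of the unary part of $\mathrm{M}$; so intuitively $\mathrm{N}$ is obtained from $\mathrm{M}$ by ``adjoining the new object $q$ together with its hom-objects to and from the old objects, as dictated by $B$,'' while leaving all $k$-ary morphisms for $k\neq 1$ among the old objects unchanged and creating no new $k$-ary morphisms with $q$ among the inputs or as output for $k\neq 1$ beyond what formal composition forces. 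Since $i$ is full and faithful, $B(a,a')=\mathrm{M}_1(a,a')$ for $a,a'\in Ob(\mathrm{M})$, so no new unary morphisms between old objects appear either. This should give $\mathrm{N}_k(a_1,\dots,a_k;b)=\mathrm{M}_k(a_1,\dots,a_k;b)$ for all old objects, i.e. $\mathrm{M}\to\mathrm{N}$ is locally an isomorphism, hence full and faithful.

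Concretely, first I would set $Ob(\mathrm{N})=Ob(\mathrm{M})\sqcup\{q\}$ and define the hom-objects of $\mathrm{N}$ by the evident formula: a multimorphism in $\mathrm{N}$ is a formal composite (a ``tree'') whose nodes are labelled either by multimorphisms of $\mathrm{M}$ or by unary morphisms of $B$, subject to the relation that composites living entirely in the overlap $E\mathrm{M}_1$ (equivalently, composites of unary morphisms of $\mathrm{M}$, viewed inside $B$ via $i$) are identified with their value in $\mathrm{M}$. One checks this is well defined because $i$ is full and faithful: any maximal unary sub-branch of such a tree is a string of composable arrows all of which may be taken in $\mathrm{M}_1$ (there being no object between an old object and itself other than old homs), and composing them in $B$ versus in $\mathrm{M}$ agrees under $i$ and $\epsilon_{\mathrm M}$. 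The symmetric group actions and the multicategory composition on $\mathrm{N}$ are the inherited ones. Then $\mathrm{N}$ receives canonical maps from $\mathrm{M}$ and from $EB$ making the square commute, and given any symmetric $\mathcal{V}$-multicategory $\mathrm{P}$ with compatible maps $\mathrm{M}\to\mathrm{P}$, $EB\to\mathrm{P}$, one defines the unique factorisation $\mathrm{N}\to\mathrm{P}$ on generators and checks it respects the identifying relation, again using full-and-faithfulness of $i$ to see that the two a priori different images of an overlap element coincide.

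The main obstacle is the bookkeeping of the free multicategory-style description of $\mathrm{N}$: one must argue that in a composite tree any branch of unary arrows passing through $q$ cannot be ``reduced'' to involve a hom-object not already present, and that distinct trees give distinct elements exactly up to the $\mathrm{M}$-relations — i.e.\ that there is no unexpected collapse that would make $\mathrm{M}_k(a_1,\dots,a_k;b)\to\mathrm{N}_k(a_1,\dots,a_k;b)$ fail to be an isomorphism. This is where $Ob(B)=Ob(\mathrm{M})\sqcup\{q\}$ with a single new object is essential (the reduction to this case having been made in Lemma 4.8 via \cite{FL}, Prop. 5.2 / Prop. 7.1 and the fact that the forgetful functor to $\mathcal{V}$-multigraphs preserves filtered colimits): with only one new object, a unary chain that leaves $Ob(\mathrm{M})$ and returns contributes a morphism built from $B(a,q)$, $B(q,a')$ and $B(q,q)$, none of which enlarges the old hom-objects, and a multimorphism with $b\in Ob(\mathrm{M})$ and all inputs in $Ob(\mathrm{M})$ can have no branch through $q$ at all once composed. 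Granting this, the equality of hom-objects is immediate and the lemma follows; I would also remark that the $\mathcal{V}=Set$ case used in Lemma 4.8 is the only one needed there, but the argument above is uniform in $\mathcal{V}$.
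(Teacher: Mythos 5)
Your overall strategy -- compute the pushout explicitly and observe that the hom-objects among the old objects do not change -- is the same as the paper's, but the presentation is genuinely different. The paper writes down closed coend formulas for every hom-object of $\mathrm{N}$ (e.g.\ $\mathrm{N}_{k}(a_{1},\dots,a_{k};q)=\int^{x\in Ob(\mathrm{M})}B(x,q)\otimes \mathrm{M}_{k}(a_{1},\dots,a_{k};x)$, with analogous formulas when $q$ occurs among the inputs), declares $\mathrm{N}_{k}(a_{1},\dots,a_{k};a)=\mathrm{M}_{k}(a_{1},\dots,a_{k};a)$ for old objects, and then asserts (without carrying it out, calling it ``long and tedious'') that this data is a symmetric $\mathcal{V}$-multicategory with the universal property; full faithfulness of $\mathrm{M}\to\mathrm{N}$ is then true by construction. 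You instead build $\mathrm{N}$ by generators and relations (trees labelled by multimorphisms of $\mathrm{M}$ and unary arrows of $B$) and argue via a normal-form/reduction analysis that no new morphisms among old objects are created. Your reduction argument is the right idea -- any internal $q$-edge is bounded by two unary $B$-arrows whose composite lies in $B(x,y)=\mathrm{M}_{1}(x,y)$ -- and after reduction your normal forms are exactly the paper's coends. What the paper's route buys is that it is formulated from the start in terms of colimits in $\mathcal{V}$, so it applies verbatim to an arbitrary cocomplete closed symmetric monoidal $\mathcal{V}$; your tree-and-elements language, in particular the claim that ``distinct trees give distinct elements exactly up to the $\mathrm{M}$-relations,'' only literally makes sense when $\mathcal{V}=Set$ (the case needed for Lemma 4.8), and making it uniform in $\mathcal{V}$ requires replacing the no-collapse/confluence argument by an explicit colimit description -- which is precisely the coend presentation. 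Both arguments leave the same substantial verification (that the explicit object is a symmetric multicategory with the universal property) unproved, so modulo that shared omission your proposal is a legitimate alternative account of the same computation.
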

\begin{proof} Let $\otimes$ be the monoidal product of $\mathcal{V}$.
We shall explicitly describe the $\mathcal{V}$-objects of
$k$-morphisms of $\mathrm{N}$. For $k\geq 0$ and
$(a_{1},...,a_{k};a)$ a $(k+1)$-tuple of objects with $a\in
Ob(\mathrm{M})$ and $a_{i}\in Ob(\mathrm{M})$ ($i=1,...,k$), we put
$\mathrm{N}_{k}(a_{1},...,a_{k};a)=\mathrm{M}_{k}(a_{1},...,a_{k};a)$.
Then we set $\mathrm{N}(\ ;q)= \int^{x\in Ob(\mathrm{M})}
B(x,q)\otimes M(\ ;x)$ and
$$\mathrm{N}_{k}(a_{1},...,a_{k};q)=\int^{x\in Ob(\mathrm{M})}
B(x,q)\otimes M_{k}(a_{1},...,a_{k};x)$$
if $a_{i}\in Ob(\mathrm{M})$ ($i=1,...,k$).
Next, let $(a_{1},...,a_{k})$ be a $k$-tuple of objects of
$\mathrm{M}$. For each $1\leq s\leq k$ let $\{i_{1},...,i_{s}\}$ be
a nonempty subset of $\{1,...,k\}$. We denote by
$(a_{1},...,a_{k})^{q_{i_{1},...,i_{s}}}$ the $k$-tuple of objects
of $B$ obtained by inserting $q$ in the $k$-tuple
$(a_{1},...,a_{k})$ at the spot $i_{j}$ ($1\leq j\leq s$). For each
$1\leq j\leq s$ and $x_{i_{j}}\in Ob(\mathrm{M})$ we denote by
$(a_{1},...,a_{k})^{\{x_{i_{1}},...,x_{i_{s}}\}}$ the $k$-tuple of
objects of $\mathrm{M}$ obtained by inserting $x_{i_{j}}$ in the
$k$-tuple $(a_{1},...,a_{k})$ at the spot $i_{j}$. We put
$$\mathrm{N}_{k}((a_{1},...,a_{k})^{q_{i_{1},...,i_{s}}};a)=$$
$$=\int^{x_1\in Ob(\mathrm{M})}...\int^{x_s\in Ob(\mathrm{M})}
\mathrm{M}_{k}((a_{1},...,a_{k})^{\{x_{i_{1}},...,x_{i_{s}}\}};a)\otimes
B(q,x_{i_{1}})\otimes...\otimes B(q,x_{i_{s}})$$ if $a\in
Ob(\mathrm{M})$, and
$$\mathrm{N}_{k}((a_{1},...,a_{k})^{q_{i_{1},...,i_{s}}};q)=$$
$=\int^{x\in Ob(\mathrm{M})}\int^{x_{i_{1}}\in
Ob(\mathrm{M})}...\int^{x_{i_{s}}\in Ob(\mathrm{M})} B(x,q)\otimes
\mathrm{M}_{k}((a_{1},...,a_{k})^{\{x_{i_{1}},...,x_{i_{s}}\}};x)\otimes
B(q,x_{i_{1}})\otimes...\otimes B(q,x_{i_{s}})$

This completes the definition of the $\mathcal{V}$-objects of
$k$-morphisms of $\mathrm{N}$. To prove that $\mathrm{N}$ is a
symmetric $\mathcal{V}$-multicategory is long and tedious. Once this
is proved, the fact that it has the desired universal property
follows.
\end{proof}


\begin{thebibliography}{99}
\bibitem{BM}
C. Berger, I. Moerdijk, {\it Resolution of coloured operads and
rectification of homotopy algebras} Categories in algebra, geometry
and mathematical physics, 31--58, Contemp. Math., 431, Amer. Math.
Soc., Providence, RI, 2007.

\bibitem{Be}
J. Bergner, {\it A model category structure on the category of
simplicial categories}, Trans. Amer. Math. Soc. 359 (2007), no. 5,
2043--2058 (electronic).

\bibitem{Bo}
F. Borceux, {\it Handbook of categorical algebra. 2. Categories and
structures}, Encyclopedia of Mathematics and its Applications, 51.
Cambridge University Press, Cambridge, 1994. xviii+443 pp.

\bibitem{DK1}
W. G.  Dwyer, D. M. Kan, {\it Function complexes in homotopical
algebra}, Topology 19 (1980), no. 4, 427--440.

\bibitem{DK2}
W. G. Dwyer, D. M. Kan, {\it Simplicial localizations of
categories}, J. Pure Appl. Algebra 17 (1980), no. 3, 267--284.

\bibitem{EM}
A. D. Elmendorf, M. A.  Mandell, {\it Rings, modules, and algebras
in infinite loop space theory}, Adv. Math. 205 (2006), no. 1,
163--228.

\bibitem{FL}
R. Fritsch, D. M. Latch, {\it Homotopy inverses for nerve}, Math. Z.
177 (1981), no. 2, 147--179.

\bibitem{Hi}
P. S. Hirschhorn, {\it Model categories and their localizations},
Mathematical Surveys and Monographs, 99. American Mathematical
Society, Providence, RI, 2003. xvi+457 pp.

\bibitem{Le}
T. Leinster, {\it Higher operads, higher categories}, London
Mathematical Society Lecture Note Series, 298. Cambridge University
Press, Cambridge, 2004. xiv+433 pp.

\bibitem{KL}
G. M. Kelly, S. Lack, {\it $V$-Cat is locally presentable or locally
bounded if $V$ is so}, Theory Appl. Categ. 8 (2001), 555--575
(electronic).

\bibitem{MP}
M. Makkai, R. Par\'{e}, {\it Accessible categories: the foundations of 
categorical model theory}, Contemporary Mathematics, 104. 
American Mathematical Society, Providence, RI, 1989. viii+176 pp.

\bibitem{St}
A. E. Stanculescu, {\it Bifibrations and weak factorisation systems}, 
Appl. Categ. Structures 20 (2012), no. 1, 19--30. 

\end{thebibliography}
\end{document}